\theoremstyle{plain}
\newtheorem{theorem} {Theorem}
\newtheorem{corollary}{Corollary}
\newtheorem{definition} {Definition}
\theoremstyle{remark}
\newtheorem{remark}{Remark}
\newtheorem{example}{Example}
\newtheorem{lemma}{Lemma}
\def\be{\begin{equation}}
\def\ee{\end{equation}}
\newcommand{\R}{\mathbb{R}}
\newcommand{\pr}{\mathsf P}
\newcommand{\F}{\mathcal{F}}
\newcommand{\Ll}{\mathcal{L}}
\newcommand{\set}[1]{\left\{#1\right\}}
\newcommand{\ex}[1]{\mathsf{E}\left[\,#1\,\right]}
\newcommand{\abs}[1]{\left\vert#1\right\vert}
\journal{Journal of Differential Equations}
\begin{document}
\title{On the distribution of local times and integral functionals of a homogeneous diffusion process}

\author[m1]{Mykola Perestyuk}
\ead{pmo@univ.kiev.ua}

\author[m1]{Yuliya Mishura}
\ead{myus@univ.kiev.ua}

\author[m1]{Georgiy Shevchenko\corref{c1}}

\cortext[c1]{Corresponding author}
\ead{zhora@univ.kiev.ua}

\address[m1]{Kiev National Taras Shevchenko University,
Department of Mechanics and Mathematics,
Volodomirska 60,
01601 Kiev,
Ukraine}

\begin{abstract}
In this article we study a homogeneous transient diffusion process $X$. We combine the theories of differential equations and of stochastic processes to obtain new results for homogeneous diffusion processes, generalizing the results of Salminen and Yor. The distribution of local time of $X$ is found in a closed form. To this end, a second order differential equation corresponding to the generator of $X$ is considered, and properties of its monotone solutions as functions of a parameter are established using their probabilistic representations. We also provide expressions and upper bounds for moments, exponential moments, and potentials of integral functionals of $X$.
\end{abstract}

\begin{keyword}
Homegeneous transient diffusion process \sep  second order differential equation \sep local time \sep integral functional \sep moment
\end{keyword}

\maketitle

\section{Introduction}

We consider a family $\{X_t^x,t\ge0,x\in\R\}$ of one-dimensional homogeneous diffusion processes
 defined on a complete filtered probability space  $\{\Omega,\F,\{\F_t\}_{t\ge0},\pr\}$ by a stochastic differential equation
\begin{equation*}
dX_t^x=b(X_t^x)dt+a(X_t^x)dW_t,\quad t\ge0,
\end{equation*}
with the initial condition $X_0^x=x\in\R$; here $\{W_t,t\ge0\}$ is a standard $\F_t$-Wiener process. 
If the initial condition is not important, we will denote the process in question by $X$. 
Let the coefficients $a,b$ of equation  (\ref{f1})
be continuous and satisfy any conditions of the existence of a non-explosive weak solution on $\R$. Assume also  $a(x)\neq0 $ for  $x\in\R$. Let also $a(x)\neq0,$ $x\in\R$. Below we introduce several objects related to the family 
$\{X_t^x,t\ge0,x\in\R\}$.

The generator of diffusion process $X$ is defined for $f\in C^2(\R)$ as $$\Ll f(x)=\frac{a(x)^2}{2}f''(x)+b(x)f'(x).$$
Define the functions
$$\varphi(x_0,x) = \exp\set{-2\int_{x_0}^z \frac{b(u)}{a(u)^2}du},\quad \Phi(x_0,x)=\int_{x_0}^x
\varphi(x_0,z)dz,\quad x_0,x\in\R\cup\{-\infty,+\infty\}.$$
It is easy to see that for a fixed $x_0\in\R$ the function $\Phi(x_0,\cdot)$ solves a second order homogeneous differential equation $\Ll\Phi(x_0,\cdot)=0$.

For $x,y\in\R$, let $\tau_y^x=\inf\{t\ge 0, X_t^x=y\}$  be the first moment of hitting point $y$. For any $(a,b)\subset \R$ and $x\in(a,b)$, let $\tau_{a,b}^x=\inf\{t\ge 0, X_t^x\notin (a,b)\} = \tau_a^x \wedge \tau_b^x$ be the first moment of exiting interval $(a,b)$. (We use the convention $\inf \varnothing = +\infty$.)

For any $t>0$ and $y\in\R$, define  
a local time of process  $X^x$ at the point $y$ on the interval $[0,t]$ by
\begin{equation}\label{local time}
L^x_t(y)=a(y)^2\lim_{\varepsilon\downarrow0}\frac{1}{2\varepsilon}
\int_0^t \mathbb{I}\{|X_s^x-y|\leq\varepsilon\}ds.
\end{equation} 
(The factor
$a(y)^2$ is included to agree with the general Meyer--Tanaka definition of a local time of a semimartingales \cite{b1}.)
The limit in \eqref{local time} exists almost surely and defines a continuous non-decreasing process
$\set{L_t^x(y),t\ge0}$ for any $x,y\in\R$. The local time on the whole interval $[0,+\infty)$ will be denoted by
$L^x_{\infty}(y)=\lim_{t\to+\infty}L_t^x(y)$.

In this article we focus on the \textit{transient} diffusion processes, i.e.\ those converging to $+\infty$ or $-\infty$ as $t\to-\infty$. First we determine the probabilistic distribution of
$L_\infty^x$ explicitly, in terms of the coefficients $a$ and $b$. This problem was considered in papers \cite{b1} and \cite{tour}, however, the distribution parameter were not determined explicitly, but  rather through limits of some functionals of solutions to inhomogeneous differential equations, see \eqref{f2}. Second, we use the distribution of $L^x_\infty$ to study integral functionals of the form $J_\infty(f)=\int_0^\infty f(X_s^x)ds$, which can be interpreted as continuous perpetuities in the framework of financial mathematics. We follow the approach of Salminen and Yor, used in \cite{SalmYor} to study integral functionals of a Wiener process with a positive drift, and generalize their results to homogeneous transient diffusion processes. Applying the results of \cite{mijauru}, we establish criteria of convergence  of almost sure finiteness of the functionals $J_\infty(f)$, calculate their  moments and potentials and bound their exponential moments.

\section{The distribution of a local time of a transient diffusion process}
According to the classical results (see e.g.\ \cite{b3}), in the case where 
$\Phi(x,+\infty)=-\Phi(x,-\infty)=+\infty$ for some (equivalently, for all)
$x\in\R$, the diffusion process  $X$ is recurrent, i.e.
$$\pr\left(\limsup_{t\to+\infty}X^x_t=+\infty,\liminf_{t\to+\infty}
X^x_t=-\infty\right)=1.$$ Therefore, $L_{\infty}^x(y)=+\infty$ for all
$x,y\in\R$ a.s. The behavior of the inverse local time process was studied in the recurrent case in  \cite{b1,salvalyor,salval}.

For this reason, in what follows we will consider only the case of a transient process $X$, where at least one 
of the integrals $\Phi(x_0,+\infty)$ and $\Phi(-\infty,x_0)$ is finite.

It is sufficient to consider the case $x=y$ only. Indeed, by the strong Markov property of the process  $X$, for any  $l\ge 0$,
$$
\pr(L^x_{\infty} (y)>l) = \pr(L^y_{\infty}(y)>l)\pr(\tau_y^x<+\infty).
$$
The probability  $\pr(\tau_y^x<+\infty)=1-\pr(\tau_y^x=+\infty)$ can be found with the help of еру well-known formula (see e.g.\ \cite[Section VIII.6, (18)]{b5}): for $x\in(a,b)$
\begin{equation*}
\pr(X^x_{\tau_{a,b}} = b) = \frac{\Phi(a,x)}{\Phi(a,b)}.
\end{equation*}
Then the value of probability in question depends on  $x$, $y$, and integrals $\Phi(x,+\infty)$, $\Phi(x,-\infty)$. Specifically, if $x>y$, then
$$
\pr(\tau_y^x=\infty) = \lim_{a\to+\infty} \pr(X^x_{\tau_{y,a}}=a) = 
 \lim_{a\to+\infty} \frac{\Phi(y,x)}{\Phi(y,a)},
$$
whence
\begin{equation}\label{Ptaux<y}
\pr(\tau_y^x=+\infty) = \begin{cases}
\frac{\Phi(y,x)}{\Phi(y,+\infty)}, & \Phi(x,+\infty)<+\infty,\\
0,& \Phi(x,+\infty)=+\infty.
\end{cases}
\end{equation}
For  $x<y$
\begin{gather*}
\pr(\tau_y^x=\infty) = \lim_{a\to-\infty} \left(1-\pr(X^x_{\tau_{a,y}}=y)\right) = 
 \lim_{a\to-\infty} \frac{\Phi(a,y)-\Phi(a,x)}{\Phi(a,y)} = \\=
 \lim_{a\to-\infty} \frac{\phi(a,x)\Phi(x,y)}{-\phi(a,y)\Phi(y,a)} =
 \lim_{a\to-\infty} \frac{-\phi(a,x)\phi(x,y)\Phi(y,x)}{-\phi(a,y)\Phi(y,a)} = \lim_{a\to-\infty} \frac{\Phi(y,x)}{\Phi(y,a)};
\end{gather*}
therefore
\begin{equation}\label{Ptaux>y}
\pr(\tau_y^x=+\infty) = \begin{cases}
\frac{\Phi(y,x)}{\Phi(y,-\infty)}, & -\Phi(x,-\infty)<+\infty,\\
0,& -\Phi(x,-\infty)=+\infty.
\end{cases}
\end{equation}
Thus it is indeed sufficient to determine distributions of variables $L^x_\infty(x)$. To this end, we will use the following facts.

\begin{itemize}
\item[(i)] According to   \cite[Theorem 1]{b1}, $\pr(L^x_\infty(x)>l)=\exp(-l\psi_x(0)),$ where
\be\label{f2}
\psi_x(0)=\psi^{x,+}(0)+\psi^{x,-}(0),\qquad
\psi^{x,\pm}(0)=\pm\frac{1}{2}\lim_{\lambda\downarrow0}\frac{y'_{\lambda,\pm}(x)}{y_{\lambda,\pm}(x)};
\ee
here for a fixed $\lambda>0$ the functions $y_{\lambda,+}$ та $y_{\lambda,-}$ are, respectively, the increasing and decreasing solutions of the equation
\be\label{f3}\Ll y=\lambda y.
\ee

\item[(ii)]According to \cite{b3}, the functions $y_{\lambda,+}$ and $y_{\lambda,-}$ admit the probabilistic representations
\be\label{f4} y_{\lambda,+}(x)=\begin{cases}E e^{-\lambda \tau_0^x},&x<0,\\(E e^{-\lambda \tau_x^0})^{-1},&x\ge0\end{cases}
\qquad y_{\lambda,-}(x)=\begin{cases}E e^{-\lambda \tau_0^x},&x\ge0,\\(E e^{-\lambda \tau_x^0})^{-1},&x<0.\end{cases}
\ee
(We set $e^{-\lambda t}=0$ for $t=+\infty$, $\lambda>0$.)

\item[(iii)] Any solution $y_{\lambda}(x)$ to equation  \eqref{f3}
has the integral representation
\be\label{f6}y_\lambda(x)=C_1(\lambda)+C_2(\lambda)\Phi(x_0,x)+2\lambda\int_{x_0}^x
\frac{\Phi(s,x)}{a^2(s)}y_\lambda(s)ds,
\ee

\be\label{f7}y'_\lambda(x)=C_2(\lambda)\varphi(x_0,x)+2\lambda\int_{x_0}^x\frac{\varphi(s,x)}{a(s)^2}y_\lambda(s)ds.
\ee
\end{itemize}
With this facts at hand, we are able to prove the main result of this section.
\begin{theorem}\label{thm:psix} The value of the parameter $\psi_x(0)$ can be written explicitly as
\begin{equation}\label{psix}
\psi_x(0) = \frac{1}2 \left( 
\frac{1}{\Phi(x,+\infty)} - \frac{1}{\Phi(x,-\infty)}\right),
\end{equation} 
where $\frac{1}{\infty}:=0$.
\end{theorem}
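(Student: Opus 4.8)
The plan is to use fact~(i) and reduce the theorem to the evaluation of the two one-sided limits in \eqref{f2}. Concretely, I will show
\[
\lim_{\lambda\downarrow0}\frac{y'_{\lambda,+}(x)}{y_{\lambda,+}(x)}=-\frac{1}{\Phi(x,-\infty)},
\qquad
\lim_{\lambda\downarrow0}\frac{y'_{\lambda,-}(x)}{y_{\lambda,-}(x)}=-\frac{1}{\Phi(x,+\infty)},
\]
with the convention $1/(\pm\infty)=0$. It is worth stressing in advance the ``crossing'' that makes the bookkeeping of signs work: the increasing solution $y_{\lambda,+}$ is governed by the behaviour of $X$ near $-\infty$, while the decreasing solution $y_{\lambda,-}$ is governed by its behaviour near $+\infty$. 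Granting the two limits, substitution into \eqref{f2} (with the signs $\psi^{x,\pm}(0)=\pm\tfrac12\lim(\cdots)$) gives
\[
\psi_x(0)=-\frac{1}{2\Phi(x,-\infty)}+\frac{1}{2\Phi(x,+\infty)}
=\frac12\left(\frac{1}{\Phi(x,+\infty)}-\frac{1}{\Phi(x,-\infty)}\right),
\]
which is \eqref{psix}; note that both summands are nonnegative because $\Phi(x,+\infty)>0$ and $\Phi(x,-\infty)<0$, in agreement with $\psi_x(0)\ge0$.

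I will treat $y_{\lambda,+}$; the argument for $y_{\lambda,-}$ is symmetric. First, from \eqref{f4} and the strong Markov property (decomposing a passage from $x$ to $b$ at intermediate levels) one gets, for every $x<b$,
\[
\frac{y_{\lambda,+}(x)}{y_{\lambda,+}(b)}=E e^{-\lambda\tau_b^x},
\qquad\text{whence}\qquad
\frac{y'_{\lambda,+}(x)}{y_{\lambda,+}(x)}=\frac{\partial}{\partial x}\log E e^{-\lambda\tau_b^x},
\]
the right-hand side being independent of the level $b$. Letting $\lambda\downarrow0$, monotone convergence gives $E e^{-\lambda\tau_b^x}\uparrow\pr(\tau_b^x<+\infty)$, and the exit formula $\pr(X^x_{\tau_{a,b}}=b)=\Phi(a,x)/\Phi(a,b)$, rewritten through the scale function and sent $a\to-\infty$, yields
\[
\pr(\tau_b^x<+\infty)=\frac{\Phi(x_0,x)-\Phi(x_0,-\infty)}{\Phi(x_0,b)-\Phi(x_0,-\infty)} .
\]
Differentiating the logarithm of the numerator in $x$ and using the identity $\Phi(x_0,x)-\Phi(x_0,-\infty)=-\varphi(x_0,x)\,\Phi(x,-\infty)$ makes the $b$-dependence disappear and produces precisely $-1/\Phi(x,-\infty)$; the symmetric computation, hitting a lower level $a$ and sending $b\to+\infty$, gives $-1/\Phi(x,+\infty)$ for $y_{\lambda,-}$.

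The one step that requires genuine justification, and which I expect to be the crux of the proof, is the interchange of $\lim_{\lambda\downarrow0}$ with the $x$-differentiation: the displays above compute the limit of $E e^{-\lambda\tau_b^{\cdot}}$, but \eqref{f2} asks for the limit of its logarithmic \emph{derivative}. To upgrade pointwise convergence to $C^1$ convergence I would use the integral representations \eqref{f6}--\eqref{f7}. Set $g_\lambda(\cdot)=E e^{-\lambda\tau_b^{\cdot}}$ on $(-\infty,b)$; these functions solve \eqref{f3}, satisfy $0\le g_\lambda\le1$, and increase to the $\Ll$-harmonic limit $g_0(\cdot)=\pr(\tau_b^{\cdot}<+\infty)$. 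Plugging $g_\lambda$ into \eqref{f6}--\eqref{f7} with a fixed base point $x_0\le x$, the remainder terms $2\lambda\int_{x_0}^{z}\Phi(s,z)a(s)^{-2}g_\lambda(s)\,ds$ and $2\lambda\int_{x_0}^{z}\varphi(s,z)a(s)^{-2}g_\lambda(s)\,ds$ are bounded on compacts uniformly in $\lambda$ (since $g_\lambda\le1$) and carry the prefactor $2\lambda$, so they vanish as $\lambda\downarrow0$, locally uniformly in $z$. This forces $g_\lambda\to g_0$ and $g'_\lambda\to g'_0$ locally uniformly, hence $\lim_\lambda g'_\lambda(x)/g_\lambda(x)=g'_0(x)/g_0(x)$, which is exactly the logarithmic derivative computed in the previous paragraph. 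Finally, the degenerate cases follow from the same $g_0$: if $\Phi(x,-\infty)=-\infty$ (so, by transience, $\Phi(x,+\infty)<+\infty$), then $g_0$ is bounded and $\Ll$-harmonic on $(-\infty,b)$, hence constant, and its logarithmic derivative at $x$ is $0=-1/\Phi(x,-\infty)$ under the convention $1/\infty=0$; the case $\Phi(x,+\infty)=+\infty$ is symmetric. Assembling the two limits as in the first paragraph then yields \eqref{psix}.
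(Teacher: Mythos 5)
Your proof is correct and follows essentially the same route as the paper: both arguments rest on facts (i)--(iii), identify $\lim_{\lambda\downarrow0}y_{\lambda,\pm}$ (up to a constant factor) with the $\Ll$-harmonic hitting probabilities from \eqref{Ptaux<y}--\eqref{Ptaux>y}, and use the integral representations \eqref{f6}--\eqref{f7} with the bound $0\le y_\lambda\le\mathrm{const}$ to kill the $O(\lambda)$ terms and pass the derivative through the limit. The only difference is organizational: the paper normalizes at $0$, extracts $\lim_\lambda y'_{\lambda,\pm}(0)$ from a difference quotient of $\pr(\tau_0^x=+\infty)$ at $x=0$, and then propagates to general $x$, whereas you differentiate the explicit harmonic limit $\pr(\tau_b^{\cdot}<+\infty)$ directly at $x$ via the decomposition $y_{\lambda,+}(x)/y_{\lambda,+}(b)=\ex{e^{-\lambda\tau_b^x}}$ — a slightly cleaner bookkeeping of the same computation.
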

\begin{proof}
We need to identify the limits in  equalities  (\ref{f2}). To this end, we start by writing   the representation (\ref{f6}) for
$y_{\lambda, +}(x)$ and putting $x_{0}=0$. Since it follows from  (\ref{f4}) that $y_{\lambda, +}(0)=1$, we can deduce from  (\ref{f6}) and  (\ref{f7}) that
\begin{equation}\label{f8}
y_{\lambda,
+}(x)=1+c_{2}(\lambda)\Phi(0,x)+2\lambda\int_{0}^x
\frac{\Phi(s,x)}{a(s)^2}y_{\lambda,+}(s)ds
\end{equation}
and  \begin{equation}\label{f9}
y_{\lambda,
+}'(x)=c_{2}(\lambda)\varphi(0,x)+2\lambda\int_{0}^x
\frac{\varphi(s,x)}{a(s)^2}y_{\lambda,+}(s)ds.
\end{equation}
Now plug $x=0$ into (\ref{f9}). Then it follows from the equality $\phi(0,0)=1$ that
$y_{\lambda, +}'(0)=c_{2}(\lambda)$. Therefore,  (\ref{f8}) and
(\ref{f9}) can be transformed  to
\begin{equation}\label{f10}
y_{\lambda, +}(x)=1+y_{\lambda,
+}'(0)\Phi(0,x)+2\lambda\int_{0}^x
\frac{\Phi(s,x)}{a(s)^2}y_{\lambda,+}(s)ds
\end{equation}
and  \begin{equation}\label{f11}
y_{\lambda, +}'(x)=y_{\lambda,+}'(0)\varphi(0,x)+2\lambda\int_{0}^{x}
\frac{\varphi(s,x)}{a(s)^2}y_{\lambda,+}(s)ds.
\end{equation}
Now, for a fixed $x\in\mathbb{R}$, we need to proceed to a limit as  $\lambda \rightarrow 0$ in  equations
(\ref{f10}) and  (\ref{f11}).
Assume that $x>0$. Then
the integrands in (\ref{f10}) and  (\ref{f11}) are positive,  furthermore,  the functions
${\Phi(s,x)}/{a(s)^2}$ and
${\varphi(s,x)}/{a(s)^2}$ are bounded. Clearly, we can assume that $\lambda\in(0,1]$. It is evident from
(\ref{f4}) that  $y_{\lambda, +}(x)$ is increasing in
$\lambda$ for $x>0$. It is also increasing in $x$, therefore, $0\leq y_{\lambda, +}(s)\leq
(E_{0}e^{-\tau_{x}^{0}})^{-1} $. Consequently,
$$\lim _{\lambda \downarrow 0} \lambda \int_0^x
\frac{\Phi(s,x)}{a(s)^2}y_{\lambda,
+}(s)ds=\lim _{\lambda \downarrow 0} \lambda
\int_0^x
\frac{\varphi(s,x)}{a(s)^2}y_{\lambda,
+}(s)ds=0.$$ The same conclusion holds  for  $x<0$.   In this case  we change the sign
of   $\Phi(s,x)$ and note that $0\leq
y_{\lambda, +}(s)\leq 1$ for  $s<0$.  Now we can find  $\lim
_{\lambda \downarrow 0} y_{\lambda, +}'(0)$. Note at first that it follows from the  representations
(\ref{f4})
that  $y_{\lambda, +}(x)$ and
$y_{\lambda, -}(x)$ are continuous in  $\lambda$ for any $x \in \mathbb{R}$.  Therefore the left-hand side of
(\ref{f10}) and the integral in its right-hand side are continuous in
$\lambda$. It means that  $y_{\lambda, +}'(0)$ and  $(y_{\lambda,
-})'(0)$ are continuous in  $\lambda$ as well. So, the limits $\lim
_{\lambda \downarrow 0} y_{\lambda, \pm}'(0)$ are equal to the values of the corresponding derivatives at zero:
$$\lim _{\lambda \downarrow 0} y_{\lambda, \pm}'(0)=\lim
_{x \rightarrow 0} \left(\frac{y_{\lambda,
\pm}(x)-1}{x}\Big|_{\lambda=0}\right).$$ 

Further, for any $\lambda>0$ we have the equalities
$E(e^{-\lambda
\tau_{0}^{x}}-1)=-P\{\tau^{x}_{0}=+\infty\}+E(e^{-\lambda
\tau_{0}^{x}}-1) \mathbb{I} \{\tau^{x}_{0}<+\infty\}$, therefore,
$E(e^{-\lambda
\tau_{0}^{x}}-1)\big|_{\lambda=0}=-P\{\tau^{x}_{0}=+\infty\}$.
We can conclude that
\begin{gather*}
\lim _{\lambda \downarrow 0} y_{\lambda,
+}'(0)=-\lim _{x \uparrow
0}\frac{P\{\tau_{0}^{x}=+\infty\}}{x}.
\end{gather*}
similarly,
\begin{gather*}
\lim _{\lambda
\downarrow 0} y_{\lambda, -}'(0)=-\lim _{x \downarrow
0}\frac{P\{\tau_{0}^{x}=+\infty\}}{x}.
\end{gather*}
From \eqref{Ptaux<y}--\eqref{Ptaux>y} we have that
\begin{equation*}
\pr(\tau_0^x=+\infty) = \begin{cases}
\frac{\Phi(0,x)}{\Phi(0,-\infty)}, & -\Phi(0,-\infty)<+\infty,\\
0,& -\Phi(0,-\infty)=+\infty
\end{cases}
\end{equation*}
for $x<0$, and
\begin{equation*}
\pr(\tau_0^x=+\infty) = \begin{cases}
\frac{\Phi(0,x)}{\Phi(0,+\infty)}, & \Phi(0,+\infty)<+\infty,\\
0,& \Phi(0,+\infty)=+\infty.
\end{cases}
\end{equation*}
for  $x>0$. Note that $$\frac{\Phi(0,x)}{x} = \frac{\Phi(0,x)-\Phi(0,0)}{x-0}\to
 \Phi'_x(0,x)|_{x=0} = \varphi(0,0)=1,\quad x\to 0.$$ Hence, we obtain
\begin{equation*} 
\lim _{\lambda \downarrow 0} y_{\lambda,
+}'(0)=-\frac{1}{\Phi(0,-\infty)},\quad
\lim _{\lambda \downarrow 0} y_{\lambda,
-}'(0)= -\frac{1}{\Phi(0,+\infty)}.
\end{equation*}
Substitute these limits into (\ref{f10})--(\ref{f11}), we obtain from the properties of the functions  $\Phi$ and  $\varphi$ that
\begin{gather*}
\lim _{\lambda \downarrow 0} y_{\lambda, +}(x) =
\begin{cases}
\frac{\phi(0,x)\Phi(x,-\infty)}{\Phi(0,-\infty)}, & -\Phi(0,-\infty)<+\infty,\\
1, & -\Phi(0,-\infty)=+\infty;
\end{cases}
\\
\lim _{\lambda \downarrow 0} y_{\lambda, -}(x) =
\begin{cases}
\frac{\phi(0,x)\Phi(x,+\infty)}{\Phi(0, +\infty)}, & \Phi(0,+\infty)<+\infty, \\
1, & \Phi(0,+\infty)=+\infty;
\end{cases}\\
\lim _{\lambda \downarrow 0} y_{\lambda, +}'(x)  = -\frac{\phi(0,x)}{\Phi(0,-\infty)},\quad
\lim _{\lambda \downarrow 0} y_{\lambda,
-}'(x)= -\frac{\phi(0,x)}{\Phi(0,+\infty)}.
\end{gather*}
Now the proof follows from \eqref{f2}.
\end{proof}

\begin{corollary}\label{cor1}
1. In each of the cases: $x=y$;  $x<y$ and $-\Phi(0,-\infty) = +\infty$; $x>y$ and $\Phi(0,+\infty) = +\infty$, the local time  $L_\infty^x(y)$ is exponentially distributed with parameter $\psi_y(0)$ given by \eqref{psix}.

2. If $x<y$ and $-\Phi(0,-\infty) < +\infty$, then the local time $L_\infty^x(y)$ is distributed as $\kappa \xi$, where $\xi$ is exponentially distributed with parameter $\psi_y(0)$,  $\kappa$ is an independent of $\xi$ Bernoulli random variable with $$\pr(\kappa =0) =1-\pr(\kappa=1) = \frac{\Phi(y,x)}{\Phi(y,-\infty)}.$$

3. If $x>y$ and $\Phi(0,+\infty) < +\infty$, then the local time $L_\infty^x(y)$ is distributed as $\kappa \xi$, where $\xi$ is exponentially distributed with parameter $\psi_y(0)$,  $\kappa$ is an independent of $\xi$ Bernoulli random variable with $$\pr(\kappa =0) =1-\pr(\kappa=1) = \frac{\Phi(y,x)}{\Phi(y,+\infty)}.$$
\end{corollary}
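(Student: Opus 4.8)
The plan is to reduce every case to the two ingredients already assembled before Theorem~\ref{thm:psix}: the strong Markov decomposition
\[
\pr(L^x_\infty(y)>l) = \pr(L^y_\infty(y)>l)\,\pr(\tau_y^x<+\infty),
\]
and the explicit forms of $\pr(\tau_y^x=+\infty)$ in \eqref{Ptaux<y}--\eqref{Ptaux>y}. By fact~(i) together with Theorem~\ref{thm:psix} applied \emph{at the point $y$}, the first factor is $e^{-l\psi_y(0)}$ with $\psi_y(0)$ given by \eqref{psix}; that is, $L^y_\infty(y)$ is exponential with parameter $\psi_y(0)$. So the entire work is to read off $\pr(\tau_y^x<+\infty)$ in each regime and to recognize the resulting law of $L^x_\infty(y)$.

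First I would dispose of one bookkeeping point. The hypotheses are phrased through $\Phi(0,\pm\infty)$, while \eqref{Ptaux<y}--\eqref{Ptaux>y} involve $\Phi(x,\pm\infty)$ and $\Phi(y,\pm\infty)$. From the cocycle identity $\varphi(x_0,z)=\varphi(x_0,x_1)\varphi(x_1,z)$ one obtains
\[
\Phi(x_0,x) = \Phi(x_0,x_1) + \varphi(x_0,x_1)\,\Phi(x_1,x),
\]
and since $\varphi(x_0,x_1)\in(0,+\infty)$ for finite $x_0,x_1$, the finiteness of $\Phi(x_0,+\infty)$ (resp.\ of $-\Phi(x_0,-\infty)$) is independent of the finite base point $x_0$. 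Hence the conditions on $\Phi(0,\pm\infty)$ are interchangeable with those on $\Phi(x,\pm\infty)$, and \eqref{Ptaux<y}--\eqref{Ptaux>y} apply directly.

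With this, Part~1 is immediate. For $x=y$ it is just fact~(i) and Theorem~\ref{thm:psix}. For $x<y$ with $-\Phi(0,-\infty)=+\infty$, formula \eqref{Ptaux>y} gives $\pr(\tau_y^x=+\infty)=0$; for $x>y$ with $\Phi(0,+\infty)=+\infty$, formula \eqref{Ptaux<y} gives $\pr(\tau_y^x=+\infty)=0$. In both sub-cases $\pr(\tau_y^x<+\infty)=1$, so the decomposition yields $\pr(L^x_\infty(y)>l)=e^{-l\psi_y(0)}$, the exponential law. For Parts~2 and~3 the target is hit with probability strictly below one: \eqref{Ptaux>y} gives $\pr(\tau_y^x<+\infty)=1-\Phi(y,x)/\Phi(y,-\infty)$ when $x<y$, and \eqref{Ptaux<y} gives $\pr(\tau_y^x<+\infty)=1-\Phi(y,x)/\Phi(y,+\infty)$ when $x>y$ (both ratios lie in $(0,1)$). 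Substituting into the decomposition, for every $l>0$,
\[
\pr(L^x_\infty(y)>l) = \bigl(1-\pr(\kappa=0)\bigr)\,e^{-l\psi_y(0)},
\]
with $\pr(\kappa=0)$ exactly as declared.

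It then remains to match this against $\kappa\xi$. Since $\xi$ is exponential (atomless, positive) and independent of $\kappa$, for $l>0$ we have $\pr(\kappa\xi>l)=\pr(\kappa=1)\,\pr(\xi>l)=\pr(\kappa=1)e^{-l\psi_y(0)}$, which coincides with the display above; letting $l\downarrow0$ recovers $\pr(\kappa\xi>0)=\pr(\kappa=1)=\pr(\tau_y^x<+\infty)$, so the atom at zero also agrees, namely $\pr(\kappa\xi=0)=\pr(\kappa=0)=\pr(\tau_y^x=+\infty)=\pr(L^x_\infty(y)=0)$ (the local time at a point that is never reached being zero). Since a nonnegative law is determined by its tail on $(0,+\infty)$ together with the resulting atom at zero, this identifies the distribution. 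I expect the only genuine subtleties to be the base-point independence of the finiteness conditions and the clean separation of the atom at zero from the exponential part; everything else is direct substitution into \eqref{Ptaux<y}--\eqref{Ptaux>y} and the Markov decomposition.
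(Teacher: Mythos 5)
Your proposal is correct and follows exactly the route the paper intends: the corollary is read off from the strong Markov factorization $\pr(L^x_\infty(y)>l)=\pr(L^y_\infty(y)>l)\pr(\tau_y^x<+\infty)$, the hitting-probability formulas \eqref{Ptaux<y}--\eqref{Ptaux>y}, and Theorem~\ref{thm:psix}, which is precisely the material the paper assembles before stating the corollary without further proof. Your extra observations (base-point independence of the finiteness conditions via the cocycle identity for $\varphi$, and the explicit matching of the atom at zero with the law of $\kappa\xi$) are correct and only make explicit what the paper leaves implicit.
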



\begin{example}
Let $a(x)\equiv a\neq 0$ and  $b(x)\equiv b$ be constant. Then  $\varphi(x,y) = e^{-2 b (y-x)/a^2}$,
$\Phi(x,y) = \frac{a^2}{2b}(1-e^{-2b(y-x)/a^2})$ for  $b\neq 0$ and  $\Phi(x,y) = y-x$ for $b=0$. In this case diffusion process $X$ is transient if and only if $b\neq 0$, moreover, $-\Phi(0,-\infty)=+\infty$ \  and $\Phi(0,+\infty)<+\infty$ for $b>0$, and   $-\Phi(0,-\infty)<+\infty$, $\Phi(0,+\infty)=+\infty$ for  $b<0$. The cases are symmetric, therefore we will consider only  the case $b>0$.

Equation \eqref{f3}  is a linear equation with constant coefficients, so its general solution has a form $y(x) = C_1 \exp\set{\frac{-b + \sqrt{b^2+2a^2\lambda}}{a^2}x}+C_2\exp\set{\frac{-b - \sqrt{b^2+2a^2\lambda}}{a^2}x}$. The increasing and decreasing solutions are $y_{\lambda,+}(x) = \exp\set{\frac{-b + \sqrt{b^2+2a^2\lambda}}{a^2}x}$ and  $y_{\lambda,-}(x) = \exp\set{\frac{-b - \sqrt{b^2+2a^2\lambda}}{a^2}x}$, respectively. Therefore,
$$
\frac{y_{\lambda,+}'(x)}{y_{\lambda,+}(x)} = \frac{-b + \sqrt{b^2+2a^2\lambda}}{a^2}\to 0, \lambda\downarrow 0;\quad
\frac{y_{\lambda,-}'(x)}{y_{\lambda,-}(x)} = \frac{-b - \sqrt{b^2+2a^2\lambda}}{a^2}\to -\frac{2b}{a^2}, \lambda\downarrow 0.
$$
Hence, $\psi_x(0) = \frac{b}{a^2}$ which coincides with the result of Theorem~\ref{thm:psix}, since in this case
$$\psi_x(0) =  \frac{1}{2 \Phi(x,+\infty)} = \frac{1}{2 {a^2}/{2b}} = \frac{b}{a^2}.
$$
Thus, for  $x\le y$ the local time $L^x_\infty(y)$ is exponentially distributed with a parameter $\frac{b}{a^2}$. For  $x>y$ the local time is distributed as  $\kappa \xi$, where $\xi$ has an exponential distribution with a parameter  $\frac{b}{a^2}$ and   $\kappa$ is Bernoulli random variable independent of   $\xi$ and distributed as $\pr(\kappa=1) =1-\pr(\kappa =0)= e^{-2b(x-y)/a^2}.$
Using the properties of exponential distribution, we see that these cases can be combined: $L^x_\infty(y)\overset{d}{=} \left(\xi-2(x-y)_+\right)_+ $, where $a_+ = a\vee 0$.
\end{example}
\begin{example}
Let $a(x)= \sqrt{x^2+1}$ and  $b(x)=x$. Then $\varphi(x,y) = \frac{x^2+1}{y^2+1}$ and
$\Phi(x,y) = (1+x^2)(\arctan y - \arctan x)$. We see that the process is transient and  $-\Phi(0,-\infty)=\Phi(0,\infty)=\frac\pi2<\infty$.

Due to Corollary \ref{cor1}, the local time $L^x_\infty(y)$ is distributed as $\kappa \xi$, where $\xi$ has an exponential distribution with a parameter
\begin{gather*}
\psi_x(0) =  \frac{1}{2 \Phi(x,+\infty)}-\frac{1}{2 \Phi(x,+\infty)} = \frac{1}{(1+x^2)(\pi - 2\arctan x)} - \frac{1}{(1+x^2)(\pi + 2\arctan x)}= \\ =\frac{4\arctan x}{(1+x^2)(\pi^2 - 4\arctan^2 x)}
\end{gather*}
and $\kappa$ is Bernoulli random variable, which is independent of   $\xi$ and distributed as $$\pr(\kappa=1) =1-\pr(\kappa =0)= \begin{cases}
\frac{\pi-2\arctan x}{\pi-2\arctan y}, & x\ge y,\\
\frac{\pi+2\arctan x}{\pi+2\arctan y}, & x< y.
\end{cases}
$$
\end{example}

\section{Integral functionals of a transient diffusion processes}

For a measurable function $f:\R\rightarrow \R$ such that $f$ and ${f}/{a^2}$ are locally integrable, define the integral functional $$J^x_\infty(f)=\int_0^\infty f(X_s^x)ds.$$
We will study the questions of finiteness and existence of moments of $J^x_\infty(f)$. We start with  the well-known occupation density formula (see e.g.\ \cite{b3, b1}):
\begin{equation}\label{eq1.17}
J^x_\infty(f)=\int_{\R}\frac{f(y)}{a(y)^2}L_{\infty}^x(y)dy.
\end{equation}
If the process $X^x$ is recurrent, then $L_{\infty}^x(y)=\infty$ a.s. for all $y\in\R$, so $J^x_\infty(f)$ is undefined unless $f$ is identically zero. Therefore, we will require that the process $X$ is transient. We remind that this holds iff $\Phi(0,+\infty)$ of $\Phi(0,-\infty)$ is finite. Moreover, if $\Phi(0,+\infty)< +\infty, \Phi(0,-\infty)=-\infty$, then $X_s^x\rightarrow +\infty$ a.s.; if  $\Phi(0,+\infty)= +\infty, \Phi(0,-\infty)>-\infty$ then $X_s^x\rightarrow -\infty$ a.s.; if  $\Phi(0,+\infty)< +\infty, \Phi(0,-\infty)>-\infty$ then $X_s^x\rightarrow +\infty$ on a set $A_+$ of positive probability,  and $X_s^x\rightarrow -\infty$ on a set $A_-=\Omega\setminus A_+$ of positive probability.

We start with a criterion of almost sure finiteness of $J^x_\infty (f)$. It was obtained in \cite{salmyourkhosh} in the case where only one of the integrals $\Phi(0,+\infty)$ of $\Phi(0,-\infty)$ is finite; the complete analysis was made in \cite{mijauru}.
Define 
$$
I_1(f)=\int_0^{+\infty} \frac{|f(y)|}{a(y)^2}\Phi(y,+\infty)dy,\quad I_2(f)=\int_0^{-\infty} \frac{|f(y)|}{a(y)^2}\Phi(y,-\infty)dy.
$$
\begin{theorem}[\cite{mijauru}]\label{miur}
For arbitrary $x\in\R$, the following statements hold.
\begin{itemize}
\item Let $\Phi(0,+\infty)< +\infty, \Phi(0,-\infty)=-\infty$.

If   $I_1(f)
<+\infty $, then $J^x_\infty(f)\in \R $ a.s.

If $I_1(f)
=\infty $ then $J^x_\infty(f)=\infty $ a.s.
\item Let $\Phi(0,+\infty)= +\infty, \Phi(0,-\infty)>-\infty$.

If  $I_2(f)
<+\infty $, then $J^x_\infty(f)\in \R $ a.s.

If $I_2(f)
=-\infty $ then $J^x_\infty(f)=\infty $ a.s.
\item Let $\Phi(0,+\infty)< +\infty, \Phi(0,-\infty)>-\infty$.

If   $I_1(f)
<+\infty $, then $J^x_\infty(f)\in \R $ a.s. on $A_+$.

If $I_1(f)
=\infty $ then $J^x_\infty(f)=\infty $ a.s. on $A_+$.

If $I_2(f)
<+\infty $, then $J^x_\infty(f)\in \R $ a.s. on $A_-$.

If $I_2(f)
=+\infty $ then $J^x_\infty(f)=\infty $ a.s. on  $A_-$.
\end{itemize}
\end{theorem}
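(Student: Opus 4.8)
The plan is to combine the occupation density formula \eqref{eq1.17} with the explicit law of the local time from Corollary~\ref{cor1}, so that the behaviour of $J^x_\infty(f)$ is governed by the integrability of the deterministic weight $\ex{L^x_\infty(y)}$ together with a pathwise control of the ``non-escape'' direction. It suffices to treat $f\ge 0$ (replace $f$ by $|f|$; then $J^x_\infty(f)\in\R$ a.s.\ is equivalent to $J^x_\infty(|f|)<+\infty$ a.s.), and I would carry out the first case $\Phi(0,+\infty)<+\infty,\ \Phi(0,-\infty)=-\infty$ in detail, the second being symmetric and the third obtained by conditioning on $A_+$ and $A_-$. The first computation to record is the mean local time: here $X^x_s\to+\infty$ a.s., $\psi_y(0)=\bigl(2\Phi(y,+\infty)\bigr)^{-1}$, and Corollary~\ref{cor1} yields $\ex{L^x_\infty(y)}=2\Phi(y,+\infty)$ for $y\ge x$, while for $y<x$ it gives $\ex{L^x_\infty(y)}=2\bigl(\Phi(y,+\infty)-\Phi(y,x)\bigr)=2\varphi(y,x)\Phi(x,+\infty)$.

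For the sufficiency direction $I_1(f)<+\infty\Rightarrow J^x_\infty(f)<\infty$ a.s., I would split $J^x_\infty(f)=\int_{-\infty}^{x}+\int_{x}^{+\infty}$ in \eqref{eq1.17} and treat the two parts differently. The left part is handled pathwise: since $X^x_s\to+\infty$ a.s., the infimum $m:=\inf_{s\ge0}X^x_s$ is a.s.\ finite, so $L^x_\infty(y)=0$ for $y<m$ and the left integral reduces to one over the random compact $[m,x]$, which is a.s.\ finite because $f/a^2$ is locally integrable and $y\mapsto L^x_\infty(y)$ is a.s.\ continuous, hence bounded on $[m,x]$. The right part is handled through its expectation: by Tonelli and the mean computed above,
\[
\ex{\int_{x}^{+\infty}\tfrac{f(y)}{a(y)^2}L^x_\infty(y)\,dy}=2\int_{x}^{+\infty}\tfrac{f(y)}{a(y)^2}\Phi(y,+\infty)\,dy\le 2\,I_1(f)+C,
\]
where $C$ is the finite contribution of $\int_{0}^{x}$ when $x<0$ (finite by local integrability and continuity of $\Phi(\cdot,+\infty)$). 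Thus $I_1(f)<\infty$ forces this expectation to be finite and the right part to be a.s.\ finite, completing the sufficiency.

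The necessity $I_1(f)=+\infty\Rightarrow J^x_\infty(f)=+\infty$ a.s.\ is the main obstacle, because an infinite mean does not by itself force a.s.\ divergence (the functional could be heavy-tailed but a.s.\ finite). The plan is to prove a zero--one law for the event $\{J^x_\infty(f)=+\infty\}$ and then to rule out the value $0$. Stripping off the contribution up to the hitting time $\tau_n$ of level $n$, which is a.s.\ finite by the occupation formula and local integrability of $f/a^2$, shows that $\{J^x_\infty(f)=+\infty\}$ is measurable with respect to the germ of the path at $+\infty$; applying the strong Markov property at the a.s.\ finite times $\tau_n\uparrow+\infty$ should yield that its probability is $0$ or $1$. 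To show it equals $1$ I would exploit that the level-indexed field $y\mapsto L^x_\infty(y)$ is a Markov process whose transition mechanism is governed by the very equation $\Ll y=\lambda y$ of Section~2, so that the block sums $\int_{x+k}^{x+k+1}\tfrac{f(y)}{a(y)^2}L^x_\infty(y)\,dy$ form a Markov chain with $\sum_k\ex{\cdot}=+\infty$; a Kochen--Stone/Paley--Zygmund estimate on the controlled correlations then gives positive probability of divergence, and the zero--one law upgrades this to probability $1$. Equivalently, one may analyse $u(x)=\ex{\exp(-J^x_\infty(f))}$, which solves $\Ll u=f u$ with $0\le u\le 1$, the dichotomy $u>0$ versus $u\equiv 0$ corresponding exactly to $I_1(f)<\infty$ versus $I_1(f)=+\infty$.

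Finally, in the two-sided case with both $\Phi(0,+\infty)$ and $\Phi(0,-\infty)$ finite, I would condition on $A_+=\{X^x_s\to+\infty\}$ and $A_-=\{X^x_s\to-\infty\}$, both of positive probability. On $A_+$ the path has an a.s.\ finite infimum and escapes to $+\infty$, so the argument above applies verbatim with $I_1$; symmetrically, on $A_-$ the path has an a.s.\ finite supremum and the criterion is $I_2$. The single hardest step throughout remains the zero--one law together with the sharp divergence criterion, i.e.\ showing that $I_1(f)=+\infty$ genuinely produces an a.s.\ infinite functional rather than merely an infinite mean.
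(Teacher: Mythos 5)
The paper does not prove this theorem at all: it is imported verbatim from \cite{mijauru} (note the attribution in the theorem header), so there is no in-paper argument to compare yours against. Judged on its own, your plan is sound and its sufficiency half is essentially complete: the pathwise treatment of the left tail via the a.s.\ finite infimum and the continuity of $y\mapsto L^x_\infty(y)$, plus the Tonelli/first-moment bound for the right tail using $\ex{L^x_\infty(y)}\le 2\Phi(y,+\infty)$, is exactly what is needed, and the reduction to $|f|$ is harmless.

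The one place where you have correctly identified the difficulty but not actually closed it is the divergence direction. Two remarks there. First, your zero--one law does work: $\pr(J^x_\infty(|f|)=\infty\mid\F_{\tau_n})$ is a.s.\ the constant $\pr(J^n_\infty(|f|)=\infty)=p$ for every $n$, and L\'evy's martingale convergence along $\F_{\tau_n}\uparrow\F_\infty$ forces $\mathbbm{1}\{J=\infty\}=p$ a.s., hence $p\in\{0,1\}$. Second, you do not need the Ray--Knight Markov structure of $y\mapsto L^x_\infty(y)$ or a Kochen--Stone correlation analysis at all: since each $L^x_\infty(y)$, $y\ge x$, is exponential with mean $m(y)=2\Phi(y,+\infty)$, Cauchy--Schwarz gives $\ex{L^x_\infty(y)L^x_\infty(z)}\le 2\,m(y)m(z)$, so for $S_N=\int_x^{N}\frac{|f(y)|}{a(y)^2}L^x_\infty(y)\,dy$ one has $\ex{S_N^2}\le 2\,\ex{S_N}^2$, and Paley--Zygmund yields $\pr(S_N>\theta\,\ex{S_N})\ge(1-\theta)^2/2$ with $\ex{S_N}\to\infty$; reverse Fatou then gives $p>0$ and the zero--one law upgrades it to $1$. (This is in substance Jeulin's lemma, which is how the cited literature handles this step.) The remaining genuine gap is the third bullet: the two-sided case is not obtained ``verbatim'' by restricting to $A_+$, because conditioning on $A_+$ changes the law of the process (it is an $h$-transform with a new scale function) and hence the conditional law of the local times and the validity of the zero--one law on $A_+$ both need to be re-derived for the conditioned diffusion; your unconditional moment computation only covers the sufficiency half there. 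You would need to check that the $h$-transformed diffusion is again transient to $+\infty$ and that its analogue of $I_1$ is finite iff $I_1(f)$ is.
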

In what follows consider the case when $\Phi(0,+\infty)< +\infty, \Phi(0,-\infty)=-\infty$, other cases being similar. Next result is a direct consequence of Corollary \ref{cor1}.
\begin{lemma}\label{lem1} Let $\Phi(0,+\infty)< +\infty, \Phi(0,-\infty)=-\infty$. Then for any $k\ge 1$
 $$ \ex{L_\infty^x(y)^k}= k!(2\Phi(y,+\infty))^k\; \text{for}\; x\leq y$$ and $$  \ex{L_\infty^x(y)^k}=2^k k!\Phi(y,+\infty)^{k-1}\big(\Phi(y,+\infty)-\Phi(y,x)\big)=2^k k!\Phi(y,+\infty)^{k-1}\varphi(y,x)\Phi(x,+\infty)$$ for $ x>y.$
\end{lemma}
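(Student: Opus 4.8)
The plan is to read the distribution of $L_\infty^x(y)$ directly off Corollary~\ref{cor1}, specialized to the present case $\Phi(0,+\infty)<+\infty$, $\Phi(0,-\infty)=-\infty$, and then compute the $k$-th moment by elementary integration. First I would simplify the exponential parameter. Since $\Phi(0,-\infty)=-\infty$ and the additivity $\Phi(0,-\infty)=\Phi(0,y)+\varphi(0,y)\Phi(y,-\infty)$ holds (with $\Phi(0,y)$ finite and $\varphi(0,y)\in(0,\infty)$), we get $\Phi(y,-\infty)=-\infty$ for every $y$, so the term $1/\Phi(y,-\infty)$ in \eqref{psix} vanishes and the rate reduces to $\psi_y(0)=\frac{1}{2\Phi(y,+\infty)}$.

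For $x\le y$ we have $-\Phi(0,-\infty)=+\infty$, so part~1 of Corollary~\ref{cor1} applies and $L_\infty^x(y)$ is exponential with parameter $\psi_y(0)$. Using the standard formula $\ex{\xi^k}=k!/\theta^k$ for an exponential variable $\xi$ of parameter $\theta$, and substituting $\theta=\frac{1}{2\Phi(y,+\infty)}$, I obtain $\ex{L_\infty^x(y)^k}=k!\,(2\Phi(y,+\infty))^k$, which is the first asserted identity.

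For $x>y$, since $\Phi(0,+\infty)<+\infty$, part~3 of Corollary~\ref{cor1} gives $L_\infty^x(y)\overset{d}{=}\kappa\xi$, with $\xi$ exponential of parameter $\psi_y(0)$ and $\kappa$ an independent Bernoulli variable satisfying $\pr(\kappa=1)=1-\frac{\Phi(y,x)}{\Phi(y,+\infty)}$. By independence and the idempotence $\kappa^k=\kappa$ (valid for all integers $k\ge1$ because $\kappa\in\{0,1\}$), I compute $\ex{L_\infty^x(y)^k}=\pr(\kappa=1)\,\ex{\xi^k}=\frac{\Phi(y,+\infty)-\Phi(y,x)}{\Phi(y,+\infty)}\cdot 2^k k!\,\Phi(y,+\infty)^k$, which is exactly $2^k k!\,\Phi(y,+\infty)^{k-1}\big(\Phi(y,+\infty)-\Phi(y,x)\big)$.

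The one nonautomatic step, and the mildest of obstacles, is the final equality $\Phi(y,+\infty)-\Phi(y,x)=\varphi(y,x)\Phi(x,+\infty)$. This is purely a manipulation of the defining integrals: from the multiplicative cocycle identity $\varphi(y,z)=\varphi(y,x)\varphi(x,z)$ (immediate since $\varphi$ is the exponential of an additive integral) one writes $\Phi(y,+\infty)-\Phi(y,x)=\int_x^{+\infty}\varphi(y,z)\,dz=\varphi(y,x)\int_x^{+\infty}\varphi(x,z)\,dz=\varphi(y,x)\Phi(x,+\infty)$. I expect no genuine difficulty in the argument; the only care required is tracking which branch of Corollary~\ref{cor1} is active and correctly handling the Bernoulli factor in the moment computation.
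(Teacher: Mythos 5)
Your proposal is correct and follows exactly the route the paper intends: the paper states Lemma~\ref{lem1} is ``a direct consequence of Corollary~\ref{cor1}'' and gives no further details, and you supply precisely that derivation (branch selection in the corollary, the exponential moment formula, the Bernoulli factor, and the cocycle identity for $\varphi$). All steps check out.
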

\begin{example}
 Let $a=1$, $b=\mu>0$ with some constant $\mu$, so that $X$ is a Brownian motion with a constant positive drift. Furthermore, in this case $\Phi(y,+\infty)=1/{2\mu}$,  $\varphi(y,x)=\exp\{-2\mu(x-y)\}$, $\Phi(0,-\infty)=-\infty$. Therefore, the criterion for $J_\infty(f)$ to be finite is $\int_0^\infty |f(x)|dx<\infty$, which coincides with that of \cite{SalmYor}. As to the moments of local times,  in this case  $ \ex{L_\infty^x(y)}= {1}/{\mu}$ for $x\leq y$ and $\ex {L_\infty^x(y)}=\frac{1}{\mu}\exp\{-2\mu(x-y)\}$ for $x>y$.
\end{example}
Further we derive  conditions for  $\ex{J^x_\infty(f)}$ to be finite.
\begin{theorem}\label{theo3} Let $\Phi(0,+\infty)< +\infty, \Phi(0,-\infty)=-\infty$ and  $I_1(f)<+\infty $. Assume additionally that
$$\int_{-\infty}^x\frac{|f(u)|}{a(u)^2}\varphi(u,x)du<+\infty.$$  Then $$\ex{J^x_\infty(f)}=2\int_x^{+\infty} \frac{ f(u) }{a(u)^2}\Phi(u,+\infty)du+2\Phi(x, +\infty)\int_{-\infty}^x\frac{ f(u) }{a(u)^2}\varphi(u,x)du.$$
\end{theorem}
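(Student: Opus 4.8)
The plan is to start from the occupation density formula \eqref{eq1.17}, which expresses $J^x_\infty(f)$ as the weighted integral $\int_{\R}\frac{f(y)}{a(y)^2}L_\infty^x(y)\,dy$, and then to move the expectation inside the integral so that the explicit first moments furnished by Lemma~\ref{lem1} can be substituted directly. Concretely, I would take expectations on both sides of \eqref{eq1.17} and aim to establish
$$\ex{J^x_\infty(f)} = \int_{\R}\frac{f(y)}{a(y)^2}\ex{L_\infty^x(y)}\,dy.$$

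Before this interchange is legitimate I must invoke Fubini's theorem, and this is the step I expect to require the most care. I would first dispose of the nonnegative case $f\ge 0$ via Tonelli's theorem, where no integrability hypothesis is needed, and then reduce the general signed case to it by verifying that $\int_{\R}\frac{|f(y)|}{a(y)^2}\ex{L_\infty^x(y)}\,dy<+\infty$. Using the $k=1$ instances of Lemma~\ref{lem1}, namely $\ex{L_\infty^x(y)}=2\Phi(y,+\infty)$ for $y\ge x$ and $\ex{L_\infty^x(y)}=2\varphi(y,x)\Phi(x,+\infty)$ for $y<x$, I would split this integral at $y=x$. On $(-\infty,x)$ the integrand equals $2\Phi(x,+\infty)\,\frac{|f(y)|}{a(y)^2}\varphi(y,x)$, whose integral is finite by the additional hypothesis of the theorem. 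On $[x,+\infty)$ the integrand equals $2\,\frac{|f(y)|}{a(y)^2}\Phi(y,+\infty)$; for $x\ge 0$ it is dominated by $2I_1(f)<+\infty$, while for $x<0$ the extra piece over the compact interval $[x,0]$ is finite because $f/a^2$ is locally integrable and $\Phi(\cdot,+\infty)$ is continuous, hence bounded there. This confirms absolute integrability in every case and justifies the interchange.

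Once Fubini is secured, the remainder is a direct substitution. Feeding the two branches of Lemma~\ref{lem1} into the split integral gives
$$\ex{J^x_\infty(f)} = 2\int_x^{+\infty}\frac{f(y)}{a(y)^2}\Phi(y,+\infty)\,dy + 2\Phi(x,+\infty)\int_{-\infty}^x \frac{f(y)}{a(y)^2}\varphi(y,x)\,dy,$$
where the constant $\Phi(x,+\infty)$ has been pulled out of the second integral. Renaming the integration variable to $u$ reproduces exactly the claimed expression, so the result follows. The only genuine obstacle is the Fubini justification; after that, the formula is an immediate consequence of the explicit first moments of the local time.
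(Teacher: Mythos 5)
Your proposal is correct and follows exactly the route of the paper, whose entire proof reads ``The statement immediately follows from Lemma \ref{lem1} and the Fubini theorem''; you have simply filled in the Fubini justification (splitting at $y=x$ and checking absolute integrability on each piece via $I_1(f)<+\infty$, the additional hypothesis, and local integrability of $f/a^2$). No discrepancies.
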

\begin{proof}
The statement immediately follows from Lemma \ref{lem1} and the Fubini theorem.
\end{proof}
\begin{remark} For a Brownian motion with a positive drift $\mu$, a sufficient condition for $\ex{J^x_\infty(f)}$ to be  finite is $$\int_x^{+\infty} | f(u)|  du+e^{-2\mu x}\int_{-\infty}^x |f(u)|e^{2\mu u}du<\infty;$$ and in that case the equality $$\ex{J^x_\infty(f)}=\frac{1}{\mu}\int_x^{+\infty}  f(u)  du+\frac{1}{\mu}e^{-2\mu x}\int_{-\infty}^x f(u) e^{2\mu u}du$$ holds. Obviously, the requirement  $\int_{\R}| f(u)| du<\infty$ is also sufficient, which is stated in \cite{SalmYor}.
\end{remark}
\begin{remark} Consider the class of locally integrable functions  $$f\colon \R \to \R,\; \text{there exists such}\;  x_0\in \R \;\text{such that}\, f(y)=0 \;\text{for any}\,  y<x_0.$$ It follows from Theorems \ref{miur} and \ref{theo3} that for any function $f$ from this class the integral $J_\infty^x(f)$ exists, and its expectation is finite.
\end{remark}

Now we continue with the moments of $J_\infty (f)$ of  higher order.
\begin{theorem}
Let $\Phi(0,+\infty)< +\infty, \Phi(0,-\infty)=-\infty$. The moments of higher order admit the following bound: for any $k>1$ \begin{equation}\begin{gathered}\label{eq1.19}
\left(\ex{\abs{J_\infty^x(f)}^k}\right)^{1/k}\leq 2(k!)^{1/k}\bigg(\int_x^{+\infty} \frac{|f(u)|}{a(u)^2}\Phi(u,+\infty)du\\{}+ \Phi(x, +\infty)^{{1}/{k}}\int_{-\infty}^x \frac{|f(u)|}{a(u)^2}\Phi(u,+\infty)^{1-{1}/{k}}\varphi(u,x)^{{1}/{k}}du.\bigg)\end{gathered}\end{equation}
\end{theorem}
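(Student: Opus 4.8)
The plan is to combine the occupation density formula \eqref{eq1.17} with the moment formulas of Lemma \ref{lem1}, passing between them by means of Minkowski's integral inequality. First I would observe that, since the local time is nonnegative, the occupation density formula gives the pointwise bound $\abs{J_\infty^x(f)}\le \int_{\R}\frac{\abs{f(y)}}{a(y)^2}L_\infty^x(y)\,dy=:J_\infty^x(\abs{f})$, so it suffices to estimate the $k$-th moment of the nonnegative functional $J_\infty^x(\abs{f})$.

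The central step is to apply the continuous (integral) form of Minkowski's inequality to the nonnegative kernel $(\omega,y)\mapsto \frac{\abs{f(y)}}{a(y)^2}L_\infty^x(y)$, which moves the $L^k(\pr)$-norm inside the spatial integral:
$$\left(\ex{J_\infty^x(\abs{f})^k}\right)^{1/k}\le \int_{\R} \frac{\abs{f(y)}}{a(y)^2}\left(\ex{L_\infty^x(y)^k}\right)^{1/k}dy.$$
I would then split the domain of integration at $y=x$ and substitute the two moment expressions from Lemma \ref{lem1}. Taking $k$-th roots, for $y\ge x$ the inner factor becomes $\left(\ex{L_\infty^x(y)^k}\right)^{1/k}=2(k!)^{1/k}\Phi(y,+\infty)$, while for $y<x$ it becomes $2(k!)^{1/k}\Phi(y,+\infty)^{1-1/k}\varphi(y,x)^{1/k}\Phi(x,+\infty)^{1/k}$. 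Factoring the common constant $2(k!)^{1/k}$ out of both integrals, and pulling $\Phi(x,+\infty)^{1/k}$ out of the integral over $(-\infty,x)$, reproduces the right-hand side of \eqref{eq1.19} verbatim.

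The only technical point to attend to is the justification of Minkowski's integral inequality, which requires joint measurability of $(\omega,y)\mapsto L_\infty^x(y)$; this follows from the almost sure continuity of $y\mapsto L_\infty^x(y)$ together with its $\omega$-measurability for fixed $y$. Since the inequality is invoked in its form valid for arbitrary nonnegative integrands, the estimate holds unconditionally: if the right-hand side of \eqref{eq1.19} is infinite there is nothing to prove, and if it is finite the same inequality simultaneously certifies that $J_\infty^x(f)$ has a finite $k$-th moment and delivers the stated bound. I do not anticipate any genuine obstacle beyond this routine measure-theoretic bookkeeping, as the substantive content has already been isolated in the moment computation of Lemma \ref{lem1}.
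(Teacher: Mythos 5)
Your proposal is correct and follows essentially the same route as the paper: the occupation density formula \eqref{eq1.17}, the generalized (integral) Minkowski inequality to bring the $L^k(\pr)$-norm inside the spatial integral, and then the moment formulas of Lemma \ref{lem1} split at $y=x$. The additional remarks on measurability and on the case where the right-hand side is infinite are routine and consistent with the paper's argument.
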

\begin{proof} We use representation \eqref{eq1.17}
 and the generalized Minkowski inequality to get the following equalities and bounds:
\begin{equation}\begin{gathered}\label{equ1.20}
\left(\ex{\abs{J_\infty^x(f)}^k}\right)^{ {1}/{k}}=\left(\ex{\abs{\int_{\R}\frac{f(y)}{a(y)^2}L_{\infty}^x(y)dy}^k}\right)^{{1}/{k}}\leq 
\int_{\R}\frac{\abs{f(y)}}{a(y)^2}\left(\ex{L_{\infty}^x(y)^k}\right)^{{1}/{k}}dy.
\end{gathered}
\end{equation}
Now \eqref{eq1.19} follows immediately from \eqref{equ1.20} and Lemma \ref{lem1}.
\end{proof}
We conclude with the existence of potential and exponential moments. 
Some related results were obtained in \cite{salmyorkhosh}.
\begin{definition} The integral functional $J_\infty(f)$ has a bounded potential $P$ if $$P=\sup_{x\in \R}\ex{J_\infty^x(f)}<\infty.$$
\end{definition}
The following result is an immediate corollary of Theorem \ref{theo3}.
\begin{theorem}\label{theorem4} Let $\Phi(0,+\infty)< +\infty, \Phi(0,-\infty)=-\infty$ and  $$P_0=2\sup_{x\in \R} \bigg(\int_x^{+\infty} \frac{ |f(u)| }{a(u)^2}\Phi(u,+\infty)du+ \Phi(x, +\infty)\int_{-\infty}^x\frac{ |f(u)| }{a(u)^2}\varphi(u,x)du\bigg)<\infty.$$
Then the integral functional $J_\infty(f)$ has a bounded potential $P\leq P_0$.
\end{theorem}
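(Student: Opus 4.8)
The plan is to show that the quantity $P_0$ is precisely a uniform (in $x$) upper bound for the expression given by Theorem~\ref{theo3}, so that the conclusion follows almost mechanically. First I would note that Theorem~\ref{theorem4} concerns $J_\infty(f)$ rather than $J_\infty^x(f)$, so the statement only makes sense once one knows that $\ex{J_\infty^x(f)}$ is finite for \emph{every} $x$; the hypothesis $P_0<\infty$ is exactly what guarantees this. Indeed, finiteness of $P_0$ forces, for each fixed $x$, both $\int_x^{+\infty}\frac{|f(u)|}{a(u)^2}\Phi(u,+\infty)\,du<\infty$ and $\int_{-\infty}^x\frac{|f(u)|}{a(u)^2}\varphi(u,x)\,du<\infty$. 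The first of these, together with the monotonicity $\Phi(u,+\infty)\le\Phi(x,+\infty)$ impossible to reverse directly, in fact yields $I_1(f)<\infty$: on $[0,\infty)$ the integrand of $I_1(f)$ agrees with that of the first integral, and on a compact piece $[0,x]$ (when $x>0$) the extra contribution is finite by local integrability of $f/a^2$ and boundedness of $\Phi(\cdot,+\infty)$ there. Thus the hypotheses $I_1(f)<\infty$ and $\int_{-\infty}^x\frac{|f(u)|}{a(u)^2}\varphi(u,x)\,du<\infty$ of Theorem~\ref{theo3} are met for each $x$, and that theorem gives the exact value of $\ex{J_\infty^x(f)}$.

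Next I would take absolute values inside that formula. Since for every $x$
\[
\ex{J_\infty^x(f)}\le 2\int_x^{+\infty}\frac{|f(u)|}{a(u)^2}\Phi(u,+\infty)\,du+2\Phi(x,+\infty)\int_{-\infty}^x\frac{|f(u)|}{a(u)^2}\varphi(u,x)\,du,
\]
the right-hand side is bounded above by $P_0$ by the very definition of $P_0$ as the supremum over $x$ of exactly this expression. Taking the supremum over $x\in\R$ on the left gives $P=\sup_x\ex{J_\infty^x(f)}\le P_0<\infty$, which is the assertion.

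The step requiring the most care is the bookkeeping that turns the pointwise hypotheses of Theorem~\ref{theo3} into something uniformly valid, and in particular verifying that $P_0<\infty$ really does imply $I_1(f)<\infty$ so that Theorem~\ref{theo3} applies. This is a routine but not entirely trivial comparison of integrals, relying on local integrability of $f/a^2$, on $\Phi(\cdot,+\infty)$ being finite and monotone on $[0,\infty)$, and on the behaviour of $\varphi(u,x)$; one must check that the finiteness built into $P_0$ propagates to each individual $x$. Once that is in hand, the remainder of the argument is simply the observation that a supremum of quantities each dominated by $P_0$ is itself at most $P_0$, and hence the proof reduces to invoking Theorem~\ref{theo3} and passing to the supremum.
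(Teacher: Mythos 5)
Your proposal is correct and follows the same route as the paper, which states the result as an immediate corollary of Theorem~\ref{theo3}: one checks that $P_0<\infty$ supplies the hypotheses of Theorem~\ref{theo3} for each $x$ (note $I_1(f)<\infty$ follows at once by taking $x=0$ in the supremum, which is simpler than the comparison you sketch), bounds $\ex{J_\infty^x(f)}$ by the corresponding term of the supremum, and passes to the supremum over $x$.
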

\begin{theorem} Let $\Phi(0,+\infty)< +\infty, \Phi(0,-\infty)=-\infty$ and   $P_0<\infty.$   Then 
$$ \ex{\exp(\lambda J^x_\infty(f))}\leq \frac{1}{1-\lambda P_0}$$ for $\lambda< P_0$.\end{theorem}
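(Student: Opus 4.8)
The plan is to deduce the bound from a Khasminskii-type estimate on the moments of $J^x_\infty(f)$. First I reduce to the case $f\ge 0$: since $J^x_\infty(f)\le J^x_\infty(\abs f)$ pointwise, for $\lambda\ge 0$ we have $\exp(\lambda J^x_\infty(f))\le \exp(\lambda J^x_\infty(\abs f))$, so it suffices to bound $\ex{\exp(\lambda J^x_\infty(\abs f))}$. Applying Theorem~\ref{theo3} to $\abs f$ in place of $f$ (its hypotheses hold because $P_0<\infty$ forces $I_1(f)<+\infty$ and, since $0<\Phi(x,+\infty)<\infty$, the integral $\int_{-\infty}^x\frac{\abs{f(u)}}{a(u)^2}\varphi(u,x)du$ to be finite for every $x$) one identifies $\ex{J^x_\infty(\abs f)}$ with the expression inside $P_0$, so that $P_0=\sup_{x\in\R}\ex{J^x_\infty(\abs f)}$. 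Expanding $\exp(\lambda J^x_\infty(\abs f))=\sum_{k\ge 0}\frac{\lambda^k}{k!}\big(J^x_\infty(\abs f)\big)^k$ and using monotone convergence (all terms are nonnegative), the theorem reduces to the moment bound
\be
\ex{\big(J^x_\infty(\abs f)\big)^k}\le k!\,P_0^k,\qquad k\ge 0,\ x\in\R.
\ee

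To prove this I argue by induction on $k$, the case $k=0$ being trivial. Abbreviate $A^x=J^x_\infty(\abs f)=\int_0^\infty \abs{f(X^x_s)}ds$ and let $A^x_{[s,\infty)}=\int_s^\infty\abs{f(X^x_u)}du$ denote its tail after time $s$. Differentiating $\big(A^x_{[s,\infty)}\big)^k$ in $s$ and integrating over $[0,\infty)$ yields the elementary identity
\be
\big(A^x\big)^k=k\int_0^\infty \abs{f(X^x_s)}\big(A^x_{[s,\infty)}\big)^{k-1}ds.
\ee
Taking expectations and applying Tonelli's theorem (the integrand is nonnegative) gives $\ex{(A^x)^k}=k\int_0^\infty \ex{\abs{f(X^x_s)}\big(A^x_{[s,\infty)}\big)^{k-1}}ds$. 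Conditioning on $\F_s$ and invoking the Markov property of $X$, the inner conditional expectation equals $\ex{(A^y)^{k-1}}$ evaluated at $y=X^x_s$, which by the induction hypothesis is at most $(k-1)!\,P_0^{k-1}$. Hence
\be
\ex{(A^x)^k}\le k\,(k-1)!\,P_0^{k-1}\int_0^\infty \ex{\abs{f(X^x_s)}}ds=k!\,P_0^{k-1}\,\ex{A^x}\le k!\,P_0^k,
\ee
the last inequality using $\ex{A^x}\le P_0$. This completes the induction.

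Summing the series then gives $\ex{\exp(\lambda J^x_\infty(\abs f))}\le\sum_{k\ge 0}(\lambda P_0)^k=\frac{1}{1-\lambda P_0}$, the series converging precisely when $\lambda P_0<1$, and the claimed bound follows for such $\lambda$. The step I expect to require the most care is the conditioning argument: one must justify that $\ex{\big(A^x_{[s,\infty)}\big)^{k-1}\mid\F_s}=\ex{(A^y)^{k-1}}\big|_{y=X^x_s}$, i.e.\ that, conditionally on $\F_s$, the functional accumulated after time $s$ is distributed as the whole functional started afresh from $X^x_s$. This is the time-homogeneous Markov property applied to the additive functional, and the attendant measurability and Tonelli interchanges (licensed by nonnegativity of all integrands) must be verified to make the peeling-off of one factor rigorous.
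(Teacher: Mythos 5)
Your proof is correct and takes essentially the same route as the paper: the probabilistic core in both is the Markov property combined with Theorems \ref{theo3} and \ref{theorem4}, which bounds the (conditional) expectation of the functional remaining after any time by $P_0$. The only difference is that the paper invokes the Dellacherie--Meyer lemma as a black box, whereas you reprove it via the standard Khasminskii-type moment induction $\ex{(A^x)^k}\le k!\,P_0^k$; note also that your derivation correctly yields the bound for $\lambda<P_0^{-1}$, confirming that the condition ``$\lambda<P_0$'' in the statement is a misprint for $\lambda<1/P_0$.
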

\begin{proof} We apply the following result of Dellacherie and Meyer \cite{deme}, see also \cite[lemma 5.2]{SalmYor}. Let $A$ be a continuous adapted non-decreasing process starting at zero such that there exists a constant $C>0$ satisfying $\ex{A_\infty-A_t\mid \F_t}\leq C$ for any $t\geq 0$. Then $$ \ex{\exp(\lambda A_\infty)}\leq \frac{1}{1-\lambda C}$$ for $\lambda< C^{-1}$.

Set $A_t=\int_0^t |f(X^x_s)|ds$. Then it follows from Markov property of $X$ and Theorems \ref{theo3} and \ref{theorem4} that  $\ex{A_\infty-A_t\mid \F_t}\leq P_0$ for any $t\geq 0$ whence the proof follows.
\end{proof}

\bibliographystyle{elsarticle-num}
\bibliography{mishshevper}

\begin{thebibliography}{10}
\expandafter\ifx\csname url\endcsname\relax
  \def\url#1{\texttt{#1}}\fi
\expandafter\ifx\csname urlprefix\endcsname\relax\def\urlprefix{URL }\fi
\expandafter\ifx\csname href\endcsname\relax
  \def\href#1#2{#2} \def\path#1{#1}\fi

\bibitem{b1}
J.~Pitman, M.~Yor, Hitting, occupation and inverse local times of
  one-dimensional diffusions: martingale and excursion approaches, Bernoulli
  9~(1) (2003) 1--24.

\bibitem{tour}
A.~Comtet, Y.~Tourigny, Excursions of diffusion processes and continued
  fractions, Ann. Inst. Henri Poincar\'e Probab. Stat. 47~(3) (2011) 850--874.

\bibitem{SalmYor}
P.~Salminen, M.~Yor, Properties of perpetual integral functionals of {B}rownian
  motion with drift, Ann. Inst. H. Poincar\'e Probab. Statist. 41~(3) (2005)
  335--347.

\bibitem{mijauru}
A.~Mijatovi\'c, M.~Urusov, Convergence of integral functionals of
  one-dimensional diffusions, Electron. Commun. Probab. 17 (2012) 1--13,
  article 61.

\bibitem{b3}
K.~It{\^o}, H.~P. McKean, Jr., Diffusion processes and their sample paths,
  Springer-Verlag, Berlin, 1974, second printing, corrected, Die Grundlehren
  der mathematischen Wissenschaften, Band 125.

\bibitem{salvalyor}
P.~Salminen, P.~Vallois, M.~Yor, On the excursion theory for linear diffusions,
  Jpn. J. Math. 2~(1) (2007) 97--127.

\bibitem{salval}
P.~Salminen, P.~Vallois, On subexponentiality of the {L}\'evy measure of the
  diffusion inverse local time; with applications to penalizations, Electron.
  J. Probab. 14 (2009) no. 67, 1963--1991.

\bibitem{b5}
I.~I. Gikhman, A.~V. Skorokhod, Introduction to the theory of random processes,
  Dover Publications Inc., Mineola, NY, 1996.

\bibitem{salmyorkhosh}
D.~Khoshnevisan, P.~Salminen, M.~Yor, A note on a.s.\ finiteness of perpetual
  integral functionals of diffusions, Electron. Comm. Probab. 11 (2006)
  108--117 (electronic).

\bibitem{deme}
C.~Dellacherie, P.-A. Meyer, Probabilities and potential. {C}, Vol. 151 of
  North-Holland Mathematics Studies, North-Holland Publishing Co., Amsterdam,
  1988.

\end{thebibliography}

\end{document}